\pgfplotsset{cycle list/Set1-4}
\newcommand{\bbN}{\mathbb{N}}
\newcommand{\bbR}{\mathbb{R}}
\newcommand{\Conv}{\operatorname{Conv}}
\newcommand{\gr}{\operatorname{gr}}
\newcommand{\supp}{\operatorname{supp}}
\newcommand\defeq{\mathrel{\overset{\makebox[0pt]{\mbox{\normalfont\tiny\sffamily def}}}{=}}}
\newcommand{\NN}{\texttt{NN}}
\newcommand{\ReLu}{\texttt{ReLU}}
\tikzstyle{yzx} = [
\def\notename{Note.}
\def\note{\par\addvspace{17pt}\small\rmfamily
\trivlist\if!\notename!\item[]\else
\item[\hskip\labelsep
{\bfseries\notename}]\fi}
\begin{document}

\title{Strong mixed-integer programming formulations for trained neural networks}
\titlerunning{MIP formulations for trained neural networks}
%
\author{Ross Anderson\inst{1} \and Joey Huchette\inst{2} \and Christian Tjandraatmadja\inst{3} \and Juan Pablo Vielma\inst{4}}
\authorrunning{R. Anderson et al.}
%
\institute{
Google Research,~\email{rander@google.com} \and
Google Research,~\email{jhuchette@google.com} \and
Google Research,~\email{ctjandra@google.com} \and
MIT,~\email{jvielma@mit.edu}
}

\maketitle              

\begin{figure}
    \centering
    \begin{tikzpicture}[yzx]
        \draw [->, dashed, line width=1] (0,0,0) -- (1.2,0,0);
        \draw [->, dashed, line width=1] (0,0,0) -- (0,1.2,0);
        \draw [->, dashed, line width=1] (0,0,0) -- (0,0,0.7);
        \node[above right] at (1.2,-.075,0) {$x_2$};
        \node[right] at (0,1.2,0) {$x_1$};
        \node[above] at (0,0,.7) {$y$};
        \coordinate (LL) at (0,0,0);
        \coordinate (UL) at (1,0,0);
        \coordinate (LU) at (0,1,0);
        \coordinate (UU) at (1,1,0.5);
        \coordinate (UM) at (1,0.5,0);
        \coordinate (MU) at (0.5,1,0);
        \coordinate (E1) at (1,0,1/4);
        \coordinate (E2) at (0,1,1/4);

        \draw [fill=gray!80] (LL) -- (UL) -- (UM) -- (MU) -- (LU) -- cycle;
        \draw [fill=gray!80] (UU) -- (UM) -- (MU) -- cycle;
        \draw (LL) -- (E2) -- (UU) -- (E1) -- cycle;
        \draw (LL) -- (UL) -- (E1) -- cycle;
        \draw (LL) -- (LU) -- (E2) -- cycle;
        
    \end{tikzpicture} \hspace{2em}
    \begin{tikzpicture}[yzx]
        \draw [->, dashed, line width=1] (0,0,0) -- (1.2,0,0);
        \draw [->, dashed, line width=1] (0,0,0) -- (0,1.2,0);
        \draw [->, dashed, line width=1] (0,0,0) -- (0,0,0.7);
        \node[above right] at (1.2,-0.075,0) {$x_2$};
        \node[right] at (0,1.2,0) {$x_1$};
        \node[above] at (0,0,.7) {$y$};
        \coordinate (LL) at (0,0,0);
        \coordinate (UL) at (1,0,0);
        \coordinate (LU) at (0,1,0);
        \coordinate (UU) at (1,1,0.5);
        \coordinate (UM) at (1,0.5,0);
        \coordinate (MU) at (0.5,1,0);
        
        \draw [fill=gray!80] (LL) -- (UL) -- (UM) -- (MU) -- (LU) -- cycle;
        \draw [fill=gray!80] (UU) -- (UM) -- (MU) -- cycle;
        \draw (LL) -- (UL) -- (UU) -- cycle;
        \draw (LL) -- (LU) -- (UU) -- cycle;
    \end{tikzpicture}
    \caption{The convex relaxation for a ReLU neuron using: \textbf{(Left)} existing MIP formulations, and \textbf{(Right)} the formulations presented in this paper.}
    \label{fig:relu}
\end{figure}

\vspace{-0.9em}
\begin{abstract}
We present an ideal mixed-integer programming (MIP) formulation for a rectified linear unit (ReLU) appearing in a trained neural network. Our formulation requires a single binary variable and no additional continuous variables beyond the input and output variables of the ReLU. We contrast it with an ideal ``extended'' formulation with a linear number of additional continuous variables, derived through standard techniques. An apparent drawback of our formulation is that it requires an exponential number of inequality constraints, but we provide a routine to separate the inequalities in linear time. We also prove that these exponentially-many constraints are facet-defining under mild conditions. Finally, we study network verification problems and observe that dynamically separating from the exponential inequalities 1) is much more computationally efficient and scalable than the extended formulation, 2) decreases the solve time of a state-of-the-art MIP solver by a factor of 7 on smaller instances, and  3) nearly matches the dual bounds of a state-of-the-art MIP solver on harder instances, after just a few rounds of separation and in orders of magnitude less time.
\keywords{Mixed-integer programming \and Formulations \and Deep learning}
\end{abstract}

\setcounter{footnote}{0}


\section{Introduction} 

Deep learning has proven immensely powerful at solving a number of important predictive tasks arising in areas such as image classification, speech recognition, machine translation, and robotics and control~\cite{Goodfellow:2016,LeCun:2015}. The workhorse model in deep learning is the feedforward network $\NN{}:\mathbb{R}^{m_0}\to \mathbb{R}^{m_s}$ with rectified linear unit (ReLU) activation functions, for which $\NN{}(x^0) = x^s$ is defined through 
\begin{equation} \label{eqn:simple-feedforward-network}
    x^i_j = \ReLu{}(w^{i,j} \cdot x^{i-1} + b^{i,j})
\end{equation}
for each layer $i \in \llbracket s \rrbracket \defeq \{1,\ldots,s\}$ and $j \in \llbracket m_i \rrbracket$. Note that the input $x^0 \in \bbR^{m_0}$ might be high-dimensional, and that the output $x^s \in \bbR^{m_s}$ may be multivariate. In this recursive description, $\ReLu{}(v) \defeq \max\{0,v\}$ is the ReLU activation function, and $w^{i,j}$ and $b^{i,j}$ are the weights and bias of an affine function which are learned during the training procedure. Each equation in \eqref{eqn:simple-feedforward-network} corresponds to a single \emph{neuron} in the network. Networks with any specialized linear transformations such as convolutional layers can be reduced to this model after training, without loss of generality.

There are numerous contexts in which one may want to solve an optimization problem containing a trained neural network such as $\NN{}$. For example, such problems arise in deep reinforcement learning problems with high dimensional action spaces and where any of the cost-to-go function, immediate cost, or the state transition functions are learned by a neural network~\cite{Arulkumaran:2017,Dulac-Arnold:2015,Mladenov:2017,Say:2017,Wu:2017}. Alternatively, there has been significant recent interest in verifying the robustness of trained neural networks deployed in systems like self-driving cars that are incredibly sensitive to unexpected behavior from the machine learning model~\cite{Carlini:2016,Papernot:2016,Szegedy:2013}. Relatedly, a string of recent work has used optimization over neural networks trained for visual perception tasks to \emph{generate} new images which are ``most representative'' for a given class~\cite{Olah:2017}, are ``dreamlike''~\cite{Mordvintsev:2015}, or adhere to a particular artistic style via neural style transfer~\cite{Gatys:2015}.
\subsection{MIP formulation preliminaries}

In this work, we study mixed-integer programming (MIP) approaches for optimization problems containing trained neural networks. In contrast to heuristic or local search methods often deployed for the applications mentioned above, MIP offers a framework for producing provably optimal solutions. This is of particular interest in the verification problem, where rigorous dual bounds can guarantee robustness in a way that purely primal methods cannot.

We focus on constructing MIP formulations for the \emph{graph} of ReLU neurons:
\begin{equation} \label{eqn:single-neuron}
\gr(\ReLu{} \circ f; [L,U]) \defeq \Set{ \left(x,(\ReLu{} \circ f) (x)\right) | L \leq x \leq U },
\end{equation}
where $\circ$ is the standard function composition operator $(g \circ f)(x) = g(f(x))$.
This substructure consists of a single ReLU activation function, taking as input an affine function $f(x) = w \cdot x + b$ over a $\eta$-dimensional box-constrained input domain. The nonlinearity is handled by introducing an auxiliary binary variable $z$ to indicate whether $(\ReLu{} \circ f) (x) = 0$ or $(\ReLu{} \circ f) (x) = f(x)$ for a given value of $x$. We focus on these particular substructures because we can readily produce a MIP formulation for the entire network as the composition of formulations for each individual neuron.\footnote{Further analysis of the interactions between neurons can be found in the full-length version of this extended abstract \cite{Anderson:2018a}.}

A MIP formulation is \emph{ideal} if the extreme points of its linear programming (LP) relaxation are integral. Ideal formulations are highly desirable from a computational perspective, and offer the strongest possible convex relaxation for the set being formulated~\cite{Vielma:2015}.

Our main contribution is an ideal formulation for a single ReLU neuron with no auxiliary continuous variables and an exponential number of inequality constraints. We show that each of these exponentially-many constraints is facet-defining under very mild conditions. We also provide a simple linear-time separation routine to generate the most violated inequality from the exponential family. This formulation is derived by constructing an ideal extended formulation that uses $\eta$ auxiliary continuous variables and projecting them out. We evaluate our methods computationally on verification problems for image classification networks trained on the MNIST digit dataset, where we observe that separating over these exponentially-many inequalities solves smaller instances faster than using Gurobi's default cut generation by a factor of 7, and (nearly) matches the dual bounds on larger instances in orders of magnitude less time.

\subsection{Relevant prior work}\label{sec:lit-review}

In recent years a number of authors have used MIP formulations to model trained neural networks~\cite{Bunel:2017,Cheng:2017,Dutta:2017,Fischetti:2018,Huchette:2018,Lomuscio:2017,Say:2017,Serra:2018a,Serra:2018,Tjeng:2017,Wu:2017,Xiao:2018}, mostly applying big-$M$ formulation techniques to ReLU-based networks. When applied to a single neuron of the form \eqref{eqn:single-neuron}, these big-$M$ formulations will not be ideal or offer an exact convex relaxation; see Example~\ref{ex:relu-big-M} for an illustration. Additionally, a stream of literature in the deep learning community has studied convex relaxations in the original space of input/output variables $x$ and $y$ (or a dual representation thereof), primarily for verification tasks~\cite{Bastani:2016,Dvijotham:2018a,Ehlers:2017}. It has been shown that these convex relaxations are equivalent to those provided by the standard big-$M$ MIP formulation, after projecting out the auxiliary binary variables (e.g.~\cite{Serra:2018a}). Moreover, some authors have investigated how to use convex relaxations within the training procedure in the hopes of producing neural networks with a priori robustness guarantees~\cite{Dvijotham:2018,Wong:2017,Wong:2018}. 

Beyond MIP and convex relaxations, a number of authors have investigated other algorithmic techniques for modeling trained neural networks in optimization problems, drawing primarily from the satisfiability, constraint programming, and global optimization communities~\cite{Bartolini:2011,Bartolini:2012,Katz:2017,Lombardi:2016,Schweidtmann:2018}. Another intriguing direction studies restrictions to the space of models that may make the optimization problem over the network inputs simpler: for example, the classes of binarized~\cite{Khalil:2018} or input convex~\cite{Amos:2017} neural networks.

Broadly, our work fits into a growing body of research in prescriptive analytics and specifically the ``predict, then optimize'' framework, which considers how to embed trained machine learning models into optimization problems~\cite{Bertsimas:2014k,Biggs:2017,Deng:2018,Donti:2017,Elmachtoub:2017,Hertog:2016,Misic:2017}. Additionally, the formulations presented below have connections with existing structures studied in the MIP and constraint programming community like indicator variables and on/off constraints~\cite{Atamturk:2018,Belotti:2015,Bonami:2015,Hijazi:2011,Hijazi:2014}.

\subsection{Starting assumptions and notation} \label{sec:assumptions}
We will assume that $-\infty < L_i < U_i < \infty$ for each input component $i$. While a bounded input domain will make the formulations and analysis considerably more difficult than the unbounded setting (see \cite{Atamturk:2018} for a similar phenomenon), it ensures that standard MIP representability conditions are satisfied (e.g. \cite[Section 11]{Vielma:2015}). Furthermore, variable bounds are natural for many applications (for example in verification problems), and are absolutely essential for ensuring reasonable dual bounds.

Define $\breve{L}, \breve{U} \in \bbR^\eta$ such that, for each $i \in \llbracket \eta \rrbracket$,
\[
    \breve{L}_i = \begin{cases} L_i & \text{ if } w_i \geq 0 \\ U_i & \text{ if } w_i < 0 \end{cases} \quad \text{ and } \quad \breve{U}_i = \begin{cases} U_i & \text{ if } w_i \geq 0 \\ L_i & \text{ if } w_i < 0 \end{cases}.
\]
This definition implies that $w_i \breve{L}_i \leq w_i \breve{U}_i$ for each $i$, which simplifies the handling of negative weights $w_i < 0$. Take the values $M^+(f) \defeq \max_{\tilde{x} \in [L,U]} f(\tilde{x}) \equiv w \cdot \breve{U} + b$ and $M^-(f) \defeq \min_{\tilde{x} \in [L,U]} f(\tilde{x}) \equiv w \cdot \breve{L} + b$. Define $\supp(w) \defeq \Set{ i \in \llbracket \eta \rrbracket | w_i \neq 0}$. Finally, take $\bbR_{\geq 0} \defeq \Set{x \in \bbR | x \geq 0}$ as the nonnegative orthant.

We say that \emph{strict activity} holds for a given ReLU neuron $\gr(\ReLu{} \circ f; [L,U])$ if $M^-(f) < 0 < M^+(f)$, or in other words, if $\gr(\ReLu{} \circ f; [L,U])$ is not equal to either $\gr(0; [L,U])$ or $\gr(f; [L,U])$. We assume for the remainder that strict activity holds for each ReLU neuron. This assumption is not onerous, as otherwise, the nonlinearity can be replaced by an affine function (either $0$ or $w \cdot x + b$). Moreover, strict activity can be verified or disproven in time linear in $\eta$.

\section{The ReLU neuron} \label{sec:relu}

The ReLU is the workhorse of deep learning models: it is easy to reason about, introduces little computational overhead, and despite its simple structure is nonetheless capable of articulating complex nonlinear relationships.

\subsection{A big-$M$ formulation} \label{ssec:big-M}

A standard big-$M$ formulation for $\gr(\ReLu{} \circ f; [L,U])$ is:
\begin{subequations} \label{eqn:relu-big-M}
\begin{align}
    y &\geq f(x) \label{eqn:relu-big-M-1} \\
    y &\leq f(x) - M^-(f) \cdot (1-z) \label{eqn:relu-big-M-2} \\
    y &\leq M^+(f) \cdot z \label{eqn:relu-big-M-3} \\
    (x,y,z) &\in [L,U] \times \bbR_{\geq 0} \times \{0,1\}.
\end{align}
\end{subequations}
This is the formulation used recently in the bevy of papers referenced in Section~\ref{sec:lit-review}. Unfortunately, this formulation is not necessarily ideal, as illustrated by the following example. 

\begin{example} \label{ex:relu-big-M}
    If $f(x) = x_1 + x_2 - 1.5$, formulation \eqref{eqn:relu-big-M} for $\gr(\ReLu{} \circ f; [0,1]^2)$ is
    \begin{subequations}
    \begin{align}
        y &\geq x_1 + x_2 - 1.5 \label{eqn:relu-example-1} \\ 
        y &\leq x_1 + x_2 - 1.5 + 1.5(1-z) \\
        y &\leq 0.5z \\
        (x,y,z) &\in [0,1]^2\times \bbR_{\geq 0} \times [0,1] \label{eqn:relu-example-5} \\
        z &\in \{0,1\}.
    \end{align}
    \end{subequations}
    The point $(\hat{x},\hat{y},\hat{z}) = ((1,0),0.25,0.5)$ is feasible for the LP relaxation (\ref{eqn:relu-example-1}-\ref{eqn:relu-example-5}); however, $(\hat{x},\hat{y}) \equiv ((1,0),0.25)$ is not in $\Conv(\gr(\ReLu{} \circ f; [0,1]^2))$, and so the formulation does not offer an exact convex relaxation (and, hence, is not ideal). See Figure~\ref{fig:relu} for an illustration: on the left, of the big-$M$ formulation projected to $(x,y)$-space, and on the right, the tightest possible convex relaxation.
\end{example}

The integrality gap of \eqref{eqn:relu-big-M} can be arbitrarily bad, even in fixed dimension $\eta$.

\begin{example}
Fix $\gamma \in \bbR_{\geq 0}$ and even $\eta \in \bbN$.
Take the affine function $f(x) = \sum_{i=1}^\eta x_i$, the input domain $[L,U] = [-\gamma,\gamma]^\eta$, and the point $\hat{x} = \gamma \cdot (1,-1,\cdots,1,-1)$ as a scaled vector of alternating ones and negative ones. We can check that $(\hat{x},\hat{y},\hat{z}) = (\hat{x},\frac{1}{2}\gamma\eta,\frac{1}{2})$ is feasible for the LP relaxation of the big-$M$ formulation \eqref{eqn:relu-big-M}. Additionally, $f(\hat{x}) = 0$, and for any $\tilde{y}$ such that $(\hat{x},\tilde{y}) \in \Conv(\gr(\ReLu{} \circ f; [L,U]))$, then $\tilde{y} = 0$ necessarily. Therefore, there exists a fixed point $\hat{x}$ in the input domain where the tightest possible convex relaxation (for example, from an ideal formulation) is exact, but the big-$M$ formulation deviates from this value by at least $\frac{1}{2}\gamma\eta$.
\end{example}

Intuitively, this example suggests that the big-$M$ formulation is particularly weak around the boundary of the input domain, as it cares only about the value $f(x)$ of the affine function, and not the particular input value $x$.

\subsection{An ideal extended formulation} \label{ssec:extended}

It is possible to produce an ideal \emph{extended} formulation for the ReLU neuron by introducing auxiliary continuous variables. The ``multiple choice'' formulation is
\begin{subequations} \label{eqn:relu-ideal-extended}
\begin{align}
    (x,y) &= (x^0,y^0) + (x^1,y^1) \label{eqn:relu-ideal-extended-1} \\
    y^0 &= 0 \geq w \cdot x^0 + b(1-z) \label{eqn:relu-ideal-extended-2} \\
    y^1 &= w \cdot x^1 + bz \geq 0 \label{eqn:relu-ideal-extended-3} \\
    L(1-z) &\leq x^0 \leq U(1-z)\label{eqn:relu-ideal-extended-4} \\
    Lz &\leq x^1 \leq Uz \label{eqn:relu-ideal-extended-5} \\
    z &\in \{0,1\},
\end{align}
\end{subequations}
is an ideal extended formulation for piecewise linear functions~\cite{Vielma:2009a}. It can alternatively be derived from techniques introduced by Balas~\cite{Balas:1985,Balas:1998}. Although the multiple choice formulation offers the tightest possible convex relaxation for a single neuron, it requires a copy $x^0$ of the input variables (note that it is straightforward to use equations \eqref{eqn:relu-ideal-extended-1} to eliminate the second copy $x^1$). This means that when the multiple choice formulation is applied to every neuron in the network to formulate $\NN{}$, the total number of continuous variables required is $m_0 + \sum_{i=1}^{r} (m_{i-1}+1)m_{i}$ (using the notation of \eqref{eqn:simple-feedforward-network}, where $m_i$ is the number of neurons in layer $i$). In contrast, the big-$M$ formulation requires only $m_0 + \sum_{i=1}^r m_i$ continuous variables to formulate the entire network. As we will see in Section~\ref{sec:computational_larger}, the quadratic growth in size of the extended formulation can quickly become burdensome. Additionally, a folklore observation in the MIP community is that multiple choice formulations tend to not perform as well as expected in simplex-based branch-and-bound algorithms, likely due to degeneracy introduced by the block structure~\cite{Vielma:2018}.

\subsection{An ideal non-extended formulation} \label{ssec:ideal-nonextended}
We now present a non-extended ideal formulation for the ReLU neuron, stated only in terms of the original variables $(x,y)$ and the single binary variable $z$. Put another way, it is the strongest possible tightening that can be applied to the big-$M$ formulation \eqref{eqn:relu-big-M}, and so matches the strength of the multiple choice formulation without the additional continuous variables. 

\begin{proposition} \label{prop:ideal-relu}
    Take some affine function $f(x) = w \cdot x + b$ over input domain $[L,U]$. The following is an ideal formulation for $\gr(\emph{\ReLu{}} \circ f; [L,U])$:
    \begin{subequations} \label{eqn:ideal-single-relu}
    \begin{align}
        y &\geq w \cdot x + b \label{eqn:ideal-single-relu-1} \\
        y &\leq \sum_{i \in I} w_i(x_i - \breve{L}_i(1-z)) + \left(b + \sum_{i \not\in I} w_i\breve{U}_i\right)z \quad \forall I \subseteq \supp(w) \label{eqn:ideal-single-relu-2} \\
        (x,y,z) &\in [L,U] \times \bbR_{\geq 0} \times \{0,1\} \label{eqn:ideal-single-relu-3}
    \end{align}
    \end{subequations}
\end{proposition}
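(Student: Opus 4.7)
My plan is to split the argument into validity and ideality, deriving the latter by showing that \eqref{eqn:ideal-single-relu} coincides with the projection of the multiple-choice formulation \eqref{eqn:relu-ideal-extended} (already known to be ideal) onto $(x,y,z)$-space; since projection onto a coordinate subset containing the integrality indicator preserves idealness, this suffices. Validity is routine: at $z=0$, the choice $I=\emptyset$ collapses \eqref{eqn:ideal-single-relu-2} to $y \le 0$, which combined with \eqref{eqn:ideal-single-relu-1} and $y \ge 0$ forces $y=0$ and $f(x)\le 0$; at $z=1$, the choice $I=\supp(w)$ yields $y \le f(x)$, pinning $y=f(x) \ge 0$. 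The remaining inequalities in \eqref{eqn:ideal-single-relu-2} are satisfied termwise using $w_i(x_i - \breve{L}_i) \ge 0$ and $w_i x_i \le w_i \breve{U}_i$, both immediate from the sign-adjusted definitions of $\breve{L}$ and $\breve{U}$.

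For ideality I would establish both inclusions between the LP relaxations of \eqref{eqn:ideal-single-relu} and of the projection of \eqref{eqn:relu-ideal-extended}. The easy direction (projection $\subseteq$ \eqref{eqn:ideal-single-relu}) follows by summing the coordinatewise bounds $w_i x^1_i \le w_i \breve{U}_i z$ for $i \notin I$ and $w_i x^0_i \ge w_i \breve{L}_i(1-z)$ for $i \in I$ coming from \eqref{eqn:relu-ideal-extended-4}--\eqref{eqn:relu-ideal-extended-5}, after substituting $y = w \cdot x^1 + bz$. The converse is the core work: given $(\hat{x}, \hat{y}, \hat{z})$ feasible for \eqref{eqn:ideal-single-relu} with $\hat{z} \in (0,1)$, I would seek a lift $\tilde{x}^0, \tilde{x}^1 \in [L,U]$ with $\hat{x} = (1-\hat{z})\tilde{x}^0 + \hat{z}\tilde{x}^1$, $f(\tilde{x}^0) \le 0$, and $f(\tilde{x}^1) = \hat{y}/\hat{z}$. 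Eliminating $\tilde{x}^0$ reduces this to finding $\tilde{x}^1$ in the box $[a,b] \subseteq [L,U]$ carved out so that the derived $\tilde{x}^0$ also lies in $[L,U]$, with $f(\tilde{x}^1) = \hat{y}/\hat{z}$; this is possible iff $\hat{y}/\hat{z} \in [f_{\min}([a,b]),\, f_{\max}([a,b])]$. The upper bound is precisely the family \eqref{eqn:ideal-single-relu-2}: at each coordinate the maximizer over $[a,b]$ picks between the bound $w_i \breve{U}_i \hat{z}$ and $w_i(\hat{x}_i - \breve{L}_i(1-\hat{z}))$, and $I \subseteq \supp(w)$ indexes the choice. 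The lower bound is free: $\hat{x} \in [a,b]$, so $f_{\min}([a,b]) \le f(\hat{x}) \le \hat{y}$ by \eqref{eqn:ideal-single-relu-1}, and since $\hat{y} \ge 0$ and $\hat{z} \le 1$ this covers both signs of $f_{\min}$. The condition $f(\tilde{x}^0) \le 0$ is automatic from affine interpolation and \eqref{eqn:ideal-single-relu-1}. The boundary cases $\hat{z} \in \{0,1\}$ reduce directly to the validity argument.

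The main obstacle is the bookkeeping in the converse direction: matching the exponential family \eqref{eqn:ideal-single-relu-2} with the coordinatewise optimization of $f$ over the intersection box $[a,b]$. The shorthand $\breve{L}, \breve{U}$ is precisely what makes $I \subseteq \supp(w)$ index the correct candidate bound at each coordinate uniformly across the sign of $w_i$, and verifying this correspondence cleanly---especially the equivalence of $\hat{y}/\hat{z} \le f_{\max}([a,b])$ to the full family of linear inequalities in \eqref{eqn:ideal-single-relu-2}---is the most delicate step.
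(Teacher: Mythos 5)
Your proposal is correct, and it shares the paper's high-level strategy (the proposition is proved by projecting the ideal multiple-choice formulation \eqref{eqn:relu-ideal-extended} onto $(x,y,z)$ and invoking that projection preserves idealness for polytopes), but the mechanics of establishing the projection are genuinely different. The paper runs Fourier--Motzkin elimination on the auxiliary variables, which mechanically produces \emph{two} exponential families --- the upper bounds \eqref{eqn:relu-nonnegative-2} and a mirror family of lower bounds \eqref{eqn:relu-nonnegative-3} --- and then disposes of the latter by exhibiting explicit conic-combination certificates (two cases depending on the sign of $h(\llbracket \eta \rrbracket \setminus I)$). You instead prove the two inclusions directly: the easy one by aggregating the box constraints of \eqref{eqn:relu-ideal-extended-4}--\eqref{eqn:relu-ideal-extended-5}, and the hard one by a primal lifting argument, writing $\hat{x} = (1-\hat{z})\tilde{x}^0 + \hat{z}\tilde{x}^1$, reducing existence of the lift to $\hat{y}/\hat{z} \in [f_{\min}([a,b]), f_{\max}([a,b])]$ over the coordinatewise intersection box, and identifying the upper bound with the family \eqref{eqn:ideal-single-relu-2} via exactly the coordinatewise minimization that underlies the separation routine of Proposition~\ref{prop:relu-separation}; your sign split for $f_{\min}$ (using $\hat{x}\in[a,b]$, $\hat{y}\ge f(\hat{x})$, $\hat{y}\ge 0$, $\hat{z}\le 1$) and the interpolation argument for $f(\tilde{x}^0)\le 0$ both check out, as do the boundary cases $\hat{z}\in\{0,1\}$. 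What each approach buys: the paper's FM derivation discovers the inequality family rather than verifying it, and its redundancy certificates are explicit dual multipliers; your argument avoids ever writing down the redundant lower-bound family, is more geometric, and makes the link to the separation procedure transparent. The only points to tighten in a full write-up are routine: in the easy inclusion also derive \eqref{eqn:ideal-single-relu-1}, $y\ge 0$, and $x\in[L,U]$ from \eqref{eqn:relu-ideal-extended-1}--\eqref{eqn:relu-ideal-extended-5}, verify nonemptiness of $[a,b]$ (which your observation $\hat{x}\in[a,b]$ already gives), and state the vertex-projection fact you are using for idealness.
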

\begin{proof}
See Appendix~\ref{app:ideal-relu}.
\qed \end{proof}

Furthermore, each of the exponentially-many inequalities in \eqref{eqn:ideal-single-relu-2} is necessary.

\begin{proposition} \label{prop:relu-facets}
    Each inequality in \eqref{eqn:ideal-single-relu-2} is facet-defining.
\end{proposition}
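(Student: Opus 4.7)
The plan is to establish facet-definingness of each inequality indexed by $I \subseteq \supp(w)$ by exhibiting $\eta + 2$ affinely independent feasible MIP points that lie on the corresponding hyperplane at equality. Under strict activity, the convex hull of the MIP feasible set is full-dimensional in $\bbR^{\eta + 2}$ --- perturbing around $(\breve{L},0,0)$ in each coordinate direction (inward into $[L,U]$) yields $\eta+1$ independent $z=0$ points where $f$ stays negative, and $(\breve{U}, M^+(f), 1)$ adds an $(\eta+2)$-nd point independent of the rest by its $z$-component --- so any face of codimension one is a facet.

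I would write $I' := \supp(w) \setminus I$ and $K := \llbracket \eta \rrbracket \setminus \supp(w)$ and build the point set from two base points plus single-coordinate perturbations. The ``off'' base is $p^0 := (x^\dagger, 0, 0)$ with $x^\dagger_i := \breve{L}_i$ for $i \in \supp(w)$ and $x^\dagger_j := L_j$ for $j \in K$; strict activity gives $f(x^\dagger) = M^-(f) < 0$, and the right-hand side of the $I$-inequality at $p^0$ collapses to $0 = y$ because $x_i = \breve{L}_i$ on $I$ and $z = 0$. The ``on'' base is $p^1 := (x^*, M^+(f), 1)$ with $x^*_i := \breve{U}_i$ for $i \in \supp(w)$ and $x^*_j := L_j$ for $j \in K$; then $f(x^*) = M^+(f) > 0$ and the right-hand side evaluates to $b + w \cdot \breve{U} = M^+(f)$. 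Then, for each $i \in I'$ I perturb $p^0$ by a small $\epsilon_i e_i$ (maintaining $f \leq 0$ and $x \in [L,U]$); for each $j \in K$ I perturb $p^1$ by $\delta_j e_j$ (leaving $y = f(x)$ untouched since $w_j = 0$); and for each $i \in I$ I perturb $p^1$ by $\delta_i e_i$ while shifting $y$ to $M^+(f) + w_i \delta_i$, which preserves $y = f(x) \geq 0$ for sufficiently small $|\delta_i|$. Direct substitution shows every one of these $\eta + 2$ points saturates the $I$-inequality: in each case the coordinates indexed by $I$ stay at their base values.

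Affine independence is then verified by differencing: subtract $p^0$ from every other point, and further subtract $p^1 - p^0$ from each $z = 1$ perturbation. The resulting $\eta + 1$ direction vectors are $(e_i, 0, 0)$ for $i \in I' \cup K$, $(e_i, w_i, 0)$ for $i \in I$, and $(x^* - x^\dagger, M^+(f), 1)$. The last vector is the unique one with nonzero $z$ component; removing it, each $(e_i, w_i, 0)$ for $i \in I$ is isolated by a nonzero $y$-entry (recall $w_i \neq 0$ on $\supp(w)$); the remaining $(e_i, 0, 0)$ standard basis vectors span the $x$-coordinates outside of $I$. Linear independence follows immediately from this triangular block structure.

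The main obstacle is essentially bookkeeping: juggling the four families of perturbations together with the sign conventions baked into the definitions of $\breve{L}$ and $\breve{U}$ (so that ``perturb inward'' means different directions depending on $\mathrm{sign}(w_i)$), and checking that each perturbation remains feasible under both the box constraint $x \in [L, U]$ and the sign of $f(x)$ required by the chosen value of $z$. Once the construction is laid out carefully, both the equality check and the linear-independence argument, organized by the block structure in the $z$ and $y$ coordinates, are mechanical.
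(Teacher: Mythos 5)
Your construction is essentially the paper's own proof: two tight base points $(\breve{L},0,0)$ (with $z=0$) and $(\breve{U},M^{+}(f),1)$ (with $z=1$), plus $\eta$ single-coordinate perturbations split between the $z=0$ point (coordinates outside $I$) and the $z=1$ point (coordinates in $I$), with strict activity guaranteeing feasibility for small perturbations and a triangular block structure giving affine independence of the $\eta+2$ points; the only cosmetic differences are that you perturb the $z=1$ point for zero-weight coordinates where the paper perturbs the $z=0$ point, and that you work directly with $\breve{L},\breve{U}$ instead of the paper's WLOG reduction to $w\geq 0$. One minor slip in your preliminary aside: $\eta+2$ affinely independent points certify dimension $\eta+1$, not full dimension $\eta+2$ (a point off the face, e.g.\ obtained via fractional $z$, would be needed), but the paper leaves full-dimensionality implicit as well and your main argument coincides with its proof.
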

\begin{proof}
See Appendix~\ref{app:relu-facets}.
\qed \end{proof}

We require the assumption of strict activity above, as introduced in Section~\ref{sec:assumptions}. Under the same condition, it is also possible to show that~\eqref{eqn:ideal-single-relu-1} is facet-defining, but we omit it in this extended abstract for brevity. As a result of this and Proposition~\ref{prop:relu-facets}, the formulation~\eqref{eqn:ideal-single-relu} is minimal (modulo variable bounds).

The proof of Proposition~\ref{prop:relu-facets} offers a geometric interpretation of the facets induced by~\eqref{eqn:ideal-single-relu-2}. Each facet is a convex combination of two faces: an $(\eta - |I|)$-dimensional face consisting of all feasible points with $z = 0$ and $x_i = \breve{L}_i$ for all $i \in \llbracket \eta \rrbracket \setminus I$, and an $|I|$-dimensional face consisting of all feasible points with $z = 1$ and $x_i = \breve{U}_i$ for all $i \in I$.

It is also possible to separate from the family \eqref{eqn:ideal-single-relu-2} in time linear in $\eta$.

\begin{proposition} \label{prop:relu-separation}
    Take a point $(\hat{x},\hat{y},\hat{z}) \in [L,U] \times \bbR_{\geq 0} \times [0,1]$, along with the set
    \[
        \hat{I} = \Set{ i \in \supp(w) | w_i\hat{x}_i < w_i\left(\breve{L}(1-\hat{z}) + \breve{U}_i\hat{z}\right) }.
    \]
    If any constraint in the family \eqref{eqn:ideal-single-relu-2} is violated at $(\hat{x},\hat{y},\hat{z})$, then the one corresponding to $\hat{I}$ is the most violated.
\end{proposition}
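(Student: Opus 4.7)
The plan is to rewrite the slack of the inequality indexed by $I$ as a sum over coordinates that is separable in $I$, and then optimize each coordinate independently.

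First, I would evaluate the right-hand side of~\eqref{eqn:ideal-single-relu-2} at $(\hat{x},\hat{y},\hat{z})$ as a function of $I$. Using that $w_i = 0$ for $i \notin \supp(w)$ (so terms outside $\supp(w)$ may be absorbed into either sum), it can be written as
$$\mathrm{RHS}(I) \;=\; b\hat{z} \;+\; \sum_{i \in I} w_i\bigl(\hat{x}_i - \breve{L}_i(1-\hat{z})\bigr) \;+\; \sum_{i \in \supp(w) \setminus I} w_i \breve{U}_i \hat{z}.$$
Since the amount by which \eqref{eqn:ideal-single-relu-2} is violated at $(\hat{x},\hat{y},\hat{z})$ equals $\hat{y} - \mathrm{RHS}(I)$, finding the most violated constraint is equivalent to minimizing $\mathrm{RHS}(I)$ over $I \subseteq \supp(w)$.

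Second, I would observe that $\mathrm{RHS}(I) = b\hat{z} + \sum_{i \in \supp(w)} c_i(I)$, where the per-coordinate cost
$$c_i(I) \;=\; \begin{cases} w_i\bigl(\hat{x}_i - \breve{L}_i(1-\hat{z})\bigr) & \text{if } i \in I, \\ w_i \breve{U}_i \hat{z} & \text{if } i \notin I, \end{cases}$$
depends only on whether $i \in I$. The minimization therefore decomposes coordinate by coordinate: a minimizer is obtained by placing $i \in I$ exactly when the ``in'' cost is strictly less than the ``out'' cost, breaking ties either way. Rearranging this condition yields $w_i\hat{x}_i < w_i\bigl(\breve{L}_i(1-\hat{z}) + \breve{U}_i\hat{z}\bigr)$, which is precisely the defining condition for $\hat{I}$. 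Hence $\hat{I}$ attains the minimum of $\mathrm{RHS}(\cdot)$.

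Third, I would conclude: if some $I$ satisfies $\mathrm{RHS}(I) < \hat{y}$, then so does $\hat{I}$, and the violation $\hat{y} - \mathrm{RHS}(\hat{I})$ is at least that at any other $I$. The only place where one might expect casework is on the sign of $w_i$, since this could in principle flip the direction of the comparison between the two costs; however, the signs are already absorbed into the definitions of $\breve{L}_i$ and $\breve{U}_i$ (arranged so that $w_i \breve{L}_i \leq w_i \breve{U}_i$), so no such casework is needed. I do not anticipate any real obstacle—once $\mathrm{RHS}(I)$ is expanded, this is a textbook separability/greedy argument, and it immediately runs in time linear in $\eta$.
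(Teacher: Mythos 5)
Your proof is correct and follows essentially the same route as the paper's: since all constraints in \eqref{eqn:ideal-single-relu-2} share the same left-hand side, maximizing violation reduces to minimizing the right-hand side, which decomposes coordinatewise over $\supp(w)$ with the per-coordinate comparison rearranging exactly to the condition defining $\hat{I}$. You simply spell out the separable argument that the paper states tersely, including the correct observation that the definitions of $\breve{L}_i$ and $\breve{U}_i$ make sign casework unnecessary.
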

\begin{proof}
    Follows from inspecting the family \eqref{eqn:ideal-single-relu-2}: each has the same left-hand-side, and so to maximize violation, it suffices to select the subset $I$ that minimizes the right-hand-side. This can be performed in a separable manner, independently for each component $i \in \supp(w)$, giving the result.
\qed \end{proof}

Observe that the inequalities \eqref{eqn:relu-big-M-2} and \eqref{eqn:relu-big-M-3} are equivalent to those in \eqref{eqn:ideal-single-relu-2} with $I = \supp(w)$ and $I = \emptyset$, respectively (modulo components $i$ with $w_i=0$). This suggests an iterative scheme to produce strong relaxations for ReLU neurons: start with the big-$M$ formulation \eqref{eqn:relu-big-M}, and use Proposition~\ref{prop:relu-separation} to separate strengthening inequalities from the exponential family \eqref{eqn:ideal-single-relu-2} as they are needed. We evaluate this approach in the following computational study.

\section{Computational experiments} \label{sec:computational}

To conclude the work, we study the strength of the ideal formulations presented in Section~\ref{sec:relu} for individual ReLU neurons. We study the verification problem on image classification networks trained on the canonical MNIST digit dataset~\cite{LeCun:1998}. We train a neural network $f : [0,1]^{28 \times 28} \to \bbR^{10}$, where the $10$ outputs correspond to the logits for each of the digits from 0 to 9. Given a labeled image $\tilde{x} \in [0,1]^{28 \times 28}$, our goal is to prove or disprove the existence of a perturbation of $\tilde{x}$ such that the neural network $f$ produces a wildly different classification result. If $f(\tilde{x})_i = \max_{j=1}^{10} f(\tilde{x})_j$, then image $\tilde{x}$ is placed in class $i$.
To evaluate robustness around $\tilde{x}$ with respect to class $j$, we can solve the following optimization problem for some small constant $\epsilon > 0$:
\[
    \max\nolimits_{a : ||a||_\infty \leq \epsilon}\: f(\tilde{x} + a)_j - f(\tilde{x} + a)_i.
\]
If the optimal solution (or a valid dual bound thereof) is less than zero, this verifies that our network is robust around $\tilde{x}$ in the sense that we cannot produce a small perturbation that will flip the classification from $i$ to $j$.

We train a smaller and a larger model, each with two convolutional layers with ReLU activation functions, feeding into a dense layer of ReLU neurons, and then a final dense linear layer. TensorFlow pseudocode specifying the two network architectures is included in Figure~\ref{fig:tf-pseudocode}. We generate 100 instances for each network by randomly selecting images $\tilde{x}$ with true label $i$ from the test data, along with a random target adversarial class $j \neq i$. Note that we make no attempts to utilize recent techniques that train the networks to be verifiable~\cite{Dvijotham:2018,Wong:2017,Wong:2018,Xiao:2018}.

\begin{figure}[htpb]
\begin{subfigure}[t]{1.0\textwidth}
{\smaller
\begin{verbatim}
input = placeholder(float32, shape=(28,28))
conv1 = conv2d(input, filters=4, kernel_size=4, 
              strides=(2,2), activation=relu, use_bias=True)
conv2 = conv2d(conv1, filters=4, kernel_size=4,
              strides=(2,2), use_bias=True)
flatten = reshape(conv2, [5*5*4])
dense = dense(flatten, 16, activation=relu, use_bias=True)
logits = dense(dense, 10, use_bias=True)
\end{verbatim}
}
\caption{Smaller ReLU network.}
\label{fig:small-relu}
\end{subfigure}
\begin{subfigure}[t]{1.0\textwidth}
\bigskip
{\smaller
\begin{verbatim}
input = placeholder(float32, shape=(28,28))
conv1 = conv2d(input, filters=16, kernel_size=4, 
              strides=(2,2), activation=relu, use_bias=True)
conv2 = conv2d(conv1, filters=32, kernel_size=4,
              strides=(2,2), activation=relu, use_bias=True)
flatten = reshape(conv2, [5*5*32])
dense = dense(flatten, 100, activation=relu, use_bias=True)
logits = dense(dense, 10, use_bias=True)
\end{verbatim}
}
\caption{Larger ReLU network.}
\label{fig:big-relu}
\end{subfigure}
\caption{TensorFlow pseudocode specifying the two network architectures used.}
\label{fig:tf-pseudocode}
\end{figure}

For all experiments, we use the Gurobi v7.5.2 solver, running with a single thread on a machine with 128 GB of RAM and 32 CPUs at 2.30 GHz. We use a time limit of 30 minutes (1800 s) for each run. We perform our experiments using the \texttt{tf.opt} package for optimization over trained neural networks; \texttt{tf.opt} is under active development at Google, with the intention to open source the project in the future. Below, the \emph{big-$M$ + \eqref{eqn:ideal-single-relu-2}} method is the big-$M$ formulation \eqref{eqn:relu-big-M} paired with separation\footnote{We use cut callbacks in Gurobi to inject separated inequalities into the cut loop. While this offers little control over when the separation procedure is run, it allows us to take advantage of Gurobi's sophisticated cut management implementation.} over the exponential family \eqref{eqn:ideal-single-relu-2}, and with Gurobi's cutting plane generation turned off. Similarly, the \emph{big-$M$} and the \emph{extended} methods are the big-$M$ formulation \eqref{eqn:relu-big-M} and the extended formulation \eqref{eqn:relu-ideal-extended} respectively, with default Gurobi settings. Finally, the \emph{big-$M$ + no cuts} method turns off Gurobi's cutting plane generation without adding separation over \eqref{eqn:ideal-single-relu-2}.

\subsection{Small ReLU network}
We start with a smaller ReLU network whose architecture is depicted in TensorFlow pseudocode in Figure~\ref{fig:small-relu}. The model attains 97.2\% test accuracy. We select a perturbation ball radius of $\epsilon = 0.1$. We report the results in Table~\ref{tab:small-relu} and in Figure~\ref{fig:perf-profile}. The big-$M$ + \eqref{eqn:ideal-single-relu-2} method solves $7$ times faster on average than the big-$M$ formulation. Indeed, for 79 out of 100 instances the big-$M$ method does not prove optimality after 30 minutes, and it is never the fastest choice (the ``win'' column). Moreover, the big-$M$ + no cuts times out on every instance, implying that using \emph{some} cuts is important. The extended method is roughly 5 times slower than the big-$M$ + \eqref{eqn:ideal-single-relu-2} method, but only exceeds the time limit on 19 instances, and so is substantially more reliable than the big-$M$ method for a network of this size. From this, we conclude that the additional strength offered by the ideal formulations \eqref{eqn:relu-ideal-extended} and \eqref{eqn:ideal-single-relu} can offer substantial computational improvement over the big-$M$ formulation \eqref{eqn:relu-big-M}.



\begin{table}[tb]
    \centering
    \begin{tabular}{lrrrr}
        \toprule
        method & time (s) & optimality gap & win \\ \midrule
        big-$M$ + \eqref{eqn:ideal-single-relu-2} &  174.49 & 0.53\% & 81 \\ 
        big-$M$  & 1233.49 & 6.03\% &  0 \\
        big-$M$ + no cuts & 1800.00 & 125.6\% & 0 \\
        extended &  890.21 & 1.26\% &  6 \\
        \bottomrule\\
    \end{tabular}
    \caption{Results for smaller network. Shifted geometric mean for time and optimality gap taken over 100 instances (shift of 10 and 1, respectively). The ``win'' column is the number of (solved) instances on which the method is the fastest.}
    \label{tab:small-relu}
\end{table}

\begin{figure}[htbp]
\begin{center}
\begin{tikzpicture}
\begin{axis}[
	xlabel=Time (s),
	ylabel=Number of instances solved,
	xmin=0,
	xmax=1800,
	ymin=0,
	ymax=100,
	cycle multi list={Set1-4},
	legend pos=outer north east,
	x tick label style={/pgf/number format/.cd, set thousands separator={}},
	scale=0.7
    ]

\addplot +[dotted, line width=1.5] coordinates {
(0.000,0)(200.016,0)(213.400,1)(250.701,2)(255.209,3)(389.366,4)(416.042,5)(428.185,6)(434.593,7)(477.248,8)(482.728,9)(549.913,10)(579.752,11)(626.431,12)(737.075,13)(872.758,14)(915.537,15)(958.458,16)(1012.978,17)(1147.197,18)(1256.312,19)(1573.421,20)(1800.000,20)
};

\addplot +[line width=1.5] coordinates { (0,0)
(0.000,0)(10.926,0)(11.029,1)(12.141,2)(12.574,3)(13.178,4)(13.310,5)(13.519,6)(14.511,7)(15.143,8)(16.212,9)(18.118,10)(19.818,11)(22.535,12)(22.964,13)(26.257,14)(30.373,15)(30.620,16)(33.884,17)(33.910,18)(35.863,19)(36.134,20)(44.185,21)(44.710,22)(46.004,23)(49.104,24)(52.136,25)(55.366,26)(56.669,27)(56.678,28)(62.419,29)(63.682,30)(64.232,31)(70.478,32)(75.827,33)(83.694,34)(88.192,35)(89.130,36)(93.818,37)(93.927,38)(96.258,39)(102.785,40)(108.975,41)(116.348,42)(117.292,43)(130.975,44)(135.703,45)(138.493,46)(140.969,47)(161.391,48)(162.063,49)(186.414,50)(186.478,51)(202.393,52)(213.807,53)(216.762,54)(221.753,55)(236.515,56)(237.472,57)(275.902,58)(304.160,59)(311.763,60)(466.913,61)(516.754,62)(564.719,63)(567.466,64)(596.327,65)(676.548,66)(711.947,67)(736.712,68)(783.253,69)(869.017,70)(895.416,71)(917.905,72)(940.710,73)(946.208,74)(955.712,75)(957.823,76)(1008.568,77)(1031.457,78)(1179.609,79)(1315.609,80)(1327.211,81)(1508.650,82)(1556.477,83)(1676.604,84)(1800.000,84)
};

\addplot +[dashed, line width=1.5] coordinates { (0,0)
(0.000,0)(291.123,0)(304.260,1)(337.765,2)(338.746,3)(354.233,4)(370.973,5)(413.280,6)(424.993,7)(460.116,8)(461.748,9)(463.096,10)(464.743,11)(477.083,12)(482.457,13)(483.141,14)(483.467,15)(500.758,16)(512.934,17)(523.865,18)(532.305,19)(546.597,20)(551.939,21)(560.813,22)(566.648,23)(567.355,24)(572.290,25)(625.472,26)(630.244,27)(638.643,28)(646.618,29)(649.133,30)(675.245,31)(678.859,32)(680.776,33)(691.489,34)(704.245,35)(707.126,36)(722.259,37)(725.898,38)(755.967,39)(775.662,40)(781.173,41)(791.785,42)(800.285,43)(808.156,44)(812.561,45)(819.229,46)(881.397,47)(887.791,48)(892.458,49)(918.828,50)(923.877,51)(937.224,52)(941.182,53)(949.978,54)(957.818,55)(965.734,56)(1038.186,57)(1087.226,58)(1113.039,59)(1116.502,60)(1139.682,61)(1139.921,62)(1154.051,63)(1202.194,64)(1211.171,65)(1211.259,66)(1218.455,67)(1222.273,68)(1244.772,69)(1273.453,70)(1327.791,71)(1329.087,72)(1361.641,73)(1384.576,74)(1389.731,75)(1427.386,76)(1448.842,77)(1471.646,78)(1516.231,79)(1517.456,80)(1800.000,80)
};


\addplot +[dashdotted, line width=1.5] coordinates {
(0.000,0)(1800,0)
};

\legend{big-$M$, big-$M$ + \eqref{eqn:ideal-single-relu-2}, extended, big-$M$ + no cuts}
\end{axis}
\end{tikzpicture}
\end{center}
\caption{Number of small network instances solved within a given amount of time. Curves to the upper left are better, with more instances solved in less time.}
\label{fig:perf-profile}
\end{figure}

\subsection{Larger ReLU network} \label{sec:computational_larger}
Now we turn to the larger ReLU network described in Figure~\ref{fig:big-relu}. The trained model attains $98.5\%$ test accuracy. We select a perturbation ball radius of $\epsilon = 10 / 256$. For these larger networks, we eschew solving the problems to optimality and focus on the quality of the dual bound available at the root node. As Gurobi does not reliably produce feasible primal solutions for these larger instances, we turn off primal heuristics and compare the approaches based on the ``verification gap'', which measures how far the dual bound is from proving robustness (i.e. an objective value of 0). To evaluate the quality of a dual bound, we measure the ``improvement percentage'' $\frac{\texttt{big\_M\_bound} - \texttt{other\_bound}}{\texttt{big\_M\_bound}}$, where our baseline for comparison, \texttt{big\_M\_bound}, is the bound from the big-$M$ + no cuts method, and \texttt{other\_bound} is the dual bound being compared.

\begin{table}[tb]
    \centering
    \begin{tabular}{lrrr}
        \toprule
        method &  bound & time (s) & improvement \\ \midrule
        big-$M$ + no cuts    & 302.03 &    3.08 &     -   \\
        big-$M$ + \eqref{eqn:ideal-single-relu-2} & 254.95 &    8.13 & 15.44\% \\ 
        big-$M$  & 246.87 &  612.65 & 18.08\% \\
        big-$M$ + 15s timeout & 290.21 &   15.00 &  3.75\% \\
        extended &      - & 1800.00 & - \\
        \bottomrule\\
    \end{tabular}
    \caption{Results at the root node for larger network. Shifted geometric mean of bound, time, and improvement over 100 instances (shift of 10).}
    \label{tab:big-relu}
\end{table}

We report aggregated results over 100 instances in Table~\ref{tab:big-relu}. First, we are unable to solve even the LP relaxation of the extended method on any of the instances in the allotted 30 minutes, due to the quadratic growth in size. In contrast, the LP relaxation of the big-$M$ + no cuts method can be solved very quickly. The big-$M$ + \eqref{eqn:ideal-single-relu-2} method strengthens this LP bound by more than $15\%$ on average, and only takes roughly $2.5\times$ as long. This is not only because the separation runs very quickly, but also for a technical reason: when Gurobi's cutting planes are disabled, the callback separating over \eqref{eqn:ideal-single-relu-2} is only called a small number of times, as determined by Gurobi's internal cut selection procedure. Therefore, this 15\% improvement is the result of only a small number of separation rounds, not an exhaustive iterative procedure (i.e. Gurobi terminates the cut loop well before all violated inequalities have been separated).

We may compare these results against the big-$M$ method, which is able to provide a modestly better bound (roughly 18\% improvement), but requires almost two orders of magnitude more time to produce the bound. For another comparison, \emph{big-$M$ + 15s timeout}, we set a smaller time limit of 15 seconds on Gurobi, which is a tighter upper bound on the maximum time used by the big-$M$ + \eqref{eqn:ideal-single-relu-2} method. In this short amount of time, Gurobi is not able to improve the bound substantially, with less than 4\% improvement. This suggests that the inequalities \eqref{eqn:ideal-single-relu-2} are not trivial to infer by generic cutting plane methods, and that it takes Gurobi many rounds of cut generation to achieve the same level of bound improvement we derive from restricting ourselves to those cuts in \eqref{eqn:ideal-single-relu-2}.

\begin{acknowledgement}
 The authors gratefully acknowledge Yeesian Ng and Ond\v{r}ej S\'{y}kora for many discussions on the topic of this paper, and for their work on the development of the \texttt{tf.opt} package used in the computational experiments.
\end{acknowledgement}

%
%
%
\bibliographystyle{splncs04}
\bibliography{master.bib}

\newpage

\begin{appendix}
\section{Deferred proofs}
\subsection{Proof of Proposition~\ref{prop:ideal-relu}} \label{app:ideal-relu}
\begin{proof}
The result follows from applying Fourier--Motzkin elimination to \eqref{eqn:relu-ideal-extended} to project out the $x^0$, $x^1$, $y^0$, and $y^1$ variables;  see~\cite[Chapter 13]{Hooker:2000} for an explanation of the approach. We start by eliminating the $x^1$, $y^0$, and $y^1$ using the equations in \eqref{eqn:relu-ideal-extended-1}, \eqref{eqn:relu-ideal-extended-2}, and \eqref{eqn:relu-ideal-extended-3}, respectively, leaving only $x^0$.

First, if there is some input component $i$ with $w_i=0$, then $x^0_i$ only appears in the constraints (\ref{eqn:relu-ideal-extended-4}-\ref{eqn:relu-ideal-extended-5}), and so the elimination step produces $L_i \leq x_i \leq U_i$. 

Second, if there is some $i$ with $w_i < 0$, then we introduce an auxiliary variable $\tilde{x}_i$ with the equation $\tilde{x}_i = -x_i$. We then replace $w_i \leftarrow |w_i|$, $L_i \leftarrow -U_i$, and $U_i \leftarrow -L_i$, and proceed as follows under the assumption that $w > 0$.

Applying the Fourier-Motzkin procedure to eliminate $x^0_1$ gives the inequalities
\begin{align*}
    y &\geq w \cdot x + b \\
    y &\leq w_1x_1 - w_1L_1(1-z) + \sum_{i>1} w_ix^0_i + bz \\
    y &\leq w_1U_1z + \sum_{i>1} w_ix^0_i + bz \\
    y &\geq w_1x_1 - w_1U_1(1-z) + \sum_{i>1} w_ix^0_i + bz \\
    y &\geq w_1L_1z + \sum_{i>1} w_ix^0_i + bz \\
    L_1 &\leq x_1 \leq U_1,
\end{align*}
along with the existing inequalities in \eqref{eqn:relu-ideal-extended} where the $x^0_1$ coefficient is zero. Repeating this procedure for each remaining component of $x^0$ yields the linear system
\begin{subequations}
\begin{gather}
    y \geq w \cdot x + b \label{eqn:relu-nonnegative-1} \\
    y \leq \sum_{i \in I} w_i x_i - \sum_{i \in I} w_iL_i(1-z) + \left(b + \sum_{i \not\in I} w_iU_i\right)z \quad \forall I \subseteq \supp(w) \label{eqn:relu-nonnegative-2} \\
    y \geq \sum_{i \in I} w_i x_i - \sum_{i \in I} w_iU_i(1-z) + \left(b + \sum_{i \not\in I} w_iL_i\right)z \quad \forall I \subseteq \supp(w) \label{eqn:relu-nonnegative-3} \\
    (x,y,z) \in [L,U] \times \bbR_{\geq 0} \times [0,1]. \label{eqn:relu-nonnegative-4}
\end{gather}
\end{subequations}
Moreover, we can show that the family of inequalities \eqref{eqn:relu-nonnegative-3} is redundant, and can therefore be removed. Fix some $I \subseteq \supp(w)$, and take $h(I) \defeq \sum_{i \in I}w_i\breve{L}_i + \sum_{i \not\in I}w_i\breve{U}_i + b$. If $h(\llbracket \eta \rrbracket \setminus I) \geq 0$, we can express the inequality in \eqref{eqn:relu-nonnegative-3} corresponding to the set $I$ as a conic combination of the remaining constraints as:
\begin{align*}
y \geq w \cdot x + b && \times\quad & 1\\
0 \geq L_i - x_i && \times\quad & w_i & \forall i \notin I\\
0 \geq z - 1 && \times\quad & h(\llbracket \eta \rrbracket \setminus I)
\end{align*}

Alternatively, if $h(\llbracket \eta \rrbracket \setminus I) < 0$, we can express the inequality in \eqref{eqn:relu-nonnegative-3} corresponding to the set $I$ as a conic combination of the remaining constraints as:
\begin{align*}
y \geq 0 && \times\quad & 1\\
0 \geq x_i - U_i && \times\quad & w_i & \forall i \in I\\
0 \geq -z && \times\quad & -h(\llbracket \eta \rrbracket \setminus I)
\end{align*}

To complete the proof, for any components $i$ where we introduced an auxiliary variable $\tilde{x}_i$, we use the corresponding equation $\tilde{x}_i = -x_i$ to eliminate $x_i$ and replace it $\tilde{x}_i$, giving the result.
\qed \end{proof}

\subsection{Proof of Proposition~\ref{prop:relu-facets}} \label{app:relu-facets}
\begin{proof}
We fix $I = \{\kappa+1,\ldots,\eta\}$ for some $\kappa$; this is without loss of generality by permuting the rows of the matrices presented below. Additionally, we presume that $w \geq 0$, which allows us to infer that $\breve{L} = L$ and $\breve{U} = U$. This is also without loss of generality by appropriately interchanging $+$ and $-$ in the definition of the $\tilde{p}^k$ below. In the following, references to \eqref{eqn:ideal-single-relu-2} are taken to be references to the inequality in \eqref{eqn:ideal-single-relu-2} corresponding to the subset $I$.

Take the two points $p^0 = (x,y,z) = (L, 0, 0)$ and $p^1 = (U, f(U), 1)$. Each point is feasible with respect to \eqref{eqn:ideal-single-relu} and satisfies \eqref{eqn:ideal-single-relu-2} at equality. Then for some $\epsilon > 0$ and for each $i \in \llbracket \eta \rrbracket \backslash I$, take $\tilde{p}^i = (x,y,z) = (L + \epsilon {\bf e}^i, 0, 0)$. Similarly, for each $i \in I$, take $\tilde{p}^i = (x,y,z) = (U - \epsilon {\bf e}^i, f(U - \epsilon {\bf e}^i), 1)$. From the strict activity assumption, there exists some $\epsilon > 0$ sufficiently small such that each $\tilde{p}^k$ is feasible with respect to \eqref{eqn:ideal-single-relu} and satisfies \eqref{eqn:ideal-single-relu-2} at equality.

This leaves us with $\eta + 2$ feasible points satisfying \eqref{eqn:ideal-single-relu-2} at equality; the result then follows by showing that the points are affinely independent. Take the matrix
\[
    \begin{pmatrix} p^1 - p^0 \\ \tilde{p}^1 - p^0 \\ \vdots \\ \tilde{p}^\kappa - p^0 \\ \tilde{p}^{\kappa+1} - p^0 \\ \vdots \\ \tilde{p}^\eta - p^0 \end{pmatrix} = \begin{pmatrix} U-L & f(U) & 1 \\ \epsilon {\bf e}^1 & 0 & 0 \\ \vdots & \vdots & \vdots \\ \epsilon {\bf e}^\kappa & 0 & 0 \\ U - L - \epsilon {\bf e}^{\kappa+1} & f(U-\epsilon{\bf e}^{\kappa+1}) & 1 \\ \vdots & \vdots & \vdots \\ U - L - \epsilon {\bf e}^{\eta} & f(U-\epsilon{\bf e}^{\eta}) & 1 \end{pmatrix} \cong \begin{pmatrix} U - L & f(U) & 1 \\ \epsilon {\bf e}^1 & 0 & 0 \\ \vdots & \vdots & \vdots \\ \epsilon {\bf e}^\kappa & 0 & 0 \\ -\epsilon {\bf e}^{\kappa+1} & -w_{\kappa+1}\epsilon & 0 \\ \vdots & \vdots & \vdots \\ -\epsilon {\bf e}^{\eta} & -w_\eta \epsilon & 0 \end{pmatrix},
\]
where the third matrix is constructed by subtracting the first row to each of row $\kappa+2$ to $\eta+1$ (i.e. those corresponding to $\tilde{p}^i-p^0$ for $i > \kappa$), and is taken to mean congruency with respect to elementary row operations. If we permute the last column (corresponding to the $z$ variable) to the first column, we observe that the resulting matrix is upper triangular with a nonzero diagonal, and so has full row rank. Therefore, the starting matrix also has full row rank, as we only applied elementary row operations, and therefore the $\eta+2$ points are affinely independent, giving the result.
\qed \end{proof}

\end{appendix}

\end{document}